\theoremstyle{plain} 
\newtheorem{theorem}{\indent\bf Theorem}[section]
\theoremstyle{definition} 
\newtheorem{definition}[theorem]{\indent\bf Definition}
\newtheorem{problem}[theorem]{\indent\bf Problem}
\newtheorem{thm}{Theorem}[section]
\newtheorem{lem}[thm]{Lemma}
\theoremstyle{definition}
\theoremstyle{remark}
\newtheorem{rem}{Remark}[section]
\newcommand{\be}{\begin{equation}}
	\newcommand{\ee}{\end{equation}}
\newcommand{\bea}{\begin{eqnarray}}
	\newcommand{\eea}{\end{eqnarray}}
\newcommand{\ben}{\begin{eqnarray*}}
	\newcommand{\een}{\end{eqnarray*}}
\newcommand{\bt}{\begin{split}}
	\newcommand{\et}{\end{split}}
\newcommand{\bet}{\begin{equation}}
\begin{document}
		\title[On several problems in p-Bergman theory]{On several problems in p-Bergman theory}
		
		%
	\author[Y. Li]{Yinji Li}
	\address{Yinji Li:  Institute of Mathematics\\Academy of Mathematics and Systems Sciences\\Chinese Academy of
		Sciences\\Beijing\\100190\\P. R. China}
	\email{1141287853@qq.com}

		\begin{abstract}
In this paper, we first answer Chen-Zhang's  problem on $p$-Bergman metric proposed in \cite{CZ22}. Second, we   prove the  off-diagonal p-Bergman kernel function $K_p(z,w)$ is H\"older continuous of order (1-$\varepsilon$) about the second component when $p\textgreater1$ for any $\varepsilon>0$, which improves the corresponding  result of Chen-Zhang. Moreover, we prove the asymptotic behavior of the maximizer of  $p$-Bergman kernel as $p\rightarrow 1^-$. Finally, we give a characterization of  a class of    holomorphic functions  on $\mathbb{B}^1$ to be $L^p$-integrable.

		\end{abstract}
		
		\thanks{}

		\maketitle
		\tableofcontents
	\section{Introduction}

The $L^2$ Bergman theory was established by Stefan Bergman in the 1920s,  is one of the fundamental theories in several complex variables and complex geometry. The $L^2$ Bergman space on a  domain in $\mathbb C^n$ is the space of $L^2$ holomorphic functions on that domain, which can be easily shown to be a Hilbert space using the theory of normal families. The $L^2$ Bergman kernel as  the integral kernel of the evaluation functional on the $L^2$ Bergman space,  shares good properties such as real analyticity and reproducing property. The $L^2$ Bergman kernel function is obtained by evaluating the kernel on the diagonal. On a bounded domain in $\mathbb C^n$, the $L^2$ Bergman kernel function is  smooth and strictly plurisubharmonic, non-vanishing and thus induces an invariant  K\"ahler metric on that domain, which is known as  the $L^2$  Bergman metric.  The $L^2$ Bergman metric plays an important role in the study of bounded domains. The $L^2$ Bergman theory can be extended to the framework of Hermitian holomorphic vector bundles over complex manifolds, and  has important applications in the study of various important problems in  complex geometry and algebraic geometry. 

In comparison with the $L^2$ Bergman theory, the $L^p$ Bergman theory has not been well studied. In \cite{NZZ16}, Ning-Zhang-Zhou  initiate a systematic study of  the $L^p$ Bergman theory, and get a deep result that a bounded domain is pseudoconvex if and only if the $L^p$ Bergman kernel is exhaustive for some $p\in (0,2)$.   Recently,  Deng-Wang-Zhang-Zhou \cite{DWZZ20} proved  the following fundamental result that two bounded hyperconvex domains in $\mathbb C^n$ are biholomorphically equivalent if and only if the normed $L^p$ Bergman spaces associated to them are linearly isometric for some $p\in (0,2)$. This shows that the $L^p$ Bergman space is a complete biholomorphic linear isometric invariant of bounded hyperconvex domains in $\mathbb C^n$. However, it is well-known that the $L^2$ Bergman space can not determine the complex structure of bounded hyperconvex domains, say the punctured disc for example. Thus the result by Deng-Wang-Zhang-Zhou indicates that the $L^p$ Bergman space is a very important research object and the $L^p$ Bergman theory deserves further development. However, unlike the $L^2$ Bergman theory, the $L^p$ spaces are generally not Hilbert spaces, which poses essential difficulties for research. A basic problem such as computing the $L^p$ Bergman kernel is highly challenging, and even the $L^p$ Bergman kernel on the punctured disk in the complex plane cannot be computed so far. Therefore, new methods and tools need to be developed.

For a bounded domain $\Omega\subset\mathbb{C}^n$, we define $A^p(\Omega)$ to be the p-Bergman space of $L^p$ holomorphic functions on $\Omega$(throughout this paper the integrals are with respect to Lebesgue measure). As introduced in \cite{NZZ16}, the $p$-Bergman kernel  $K_p(z)$ is defined as 
$$K_p(z)=\sup_{f\in A^p(\Omega)\setminus \{0\}}\frac{|f(z)|^p}{\|f\|_p^p},$$
where $\|f\|_p=(\int_\Omega|f|^p)^{1/p}$. The $p$-Bergman kernel can also be defined via a minimizing problem which was first introduced by Bergman himself in the case $p=2$:
\begin{align*}
m_p(z):=\inf\{||f||_p:f\in A^p(\Omega),f(z)=1\}.
\end{align*}
By a normal family argument, we know that there exists at least one minimizer for $p\textgreater0$ and exactly one minimizer $m_p(\cdot,z)$ for $p\geq1$. 
 It is easy to see that  $K_p(z):=m_p(z)^{-p}$ for $p\textgreater0$. The the off-diagonal $p$-Berman kernal is defined as  $K_p(z,w):=m_p(z,w)K_p(w)$ for $p\geq1$.
 
  Recently, Chen-Zhang \cite{CZ22} explored further fundamental aspects of  the $L^p$ Bergman theory using variational methods.   They derived  reproducing formula for $L^p$ Bergman kernels and  show that the off-diagonal $L^p$ Bergman kernel ($p$-Bergman kernel for short) $K_p(z,\cdot)$ is H\"older continuous of order $\frac{1}{2}$ for $p>1$ and of order $\frac{1}{2(n+2)}$ for $p=1$. They also defined the $p$-Bergman mertic $B_p(z;X)$ and showed that the  $p$-Bergman metric $B_p(z;X) $ tends to the Carath\'eodory metric $C(z;X)$ as $p\rightarrow \infty$  and the generalized Levi form $i\partial\bar\partial\log K_p(z;X)$  is no less than $B_p(z;X)^2$ for $p\geq 2$ and $C(z;X)^2$ for $p\leq 2$. 
  
  Since it is well-known that $i\partial\bar\partial\log K_p(z;X)=B_p(z;X)^2$ for $p=2$, Chen-Zhang raised the following 

  \begin{problem}[{\cite[Problem 8]{CZ22}}]\label{Prob} Is it possible to conclude that $i\partial\bar\partial\log K_p(z;X)=B_p(z;X)^2$ for $2<p<+\infty$?
  \end{problem}
  
In this paper, we first answer Problem \ref{Prob} by establishing the following 
\begin{thm}\label{thm: main 1}
Let $\Omega$ be complete circular and bounded homogeneous domain in $\mathbb{C}^n$, we have for $X\neq0$,
\begin{align*}
i\partial\bar{\partial} K_p(0;X)\textgreater B_p(0;X)^2,\ p\textgreater2,
\end{align*}
\begin{align*}
i\partial\bar{\partial} K_p(0;X)\textless B_p(0;X)^2,\ p\textless2.
\end{align*}
\end{thm}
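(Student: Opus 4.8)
The plan is to prove each side of the asserted inequality separately and then exploit a strict monotonicity in $p$. The key observation is that the left-hand side $i\partial\bar\partial\log K_p(0;X)$ is in fact \emph{independent} of $p$ and equals the $L^2$ Bergman metric $B_2(0;X)^2$, whereas the right-hand side $B_p(0;X)^2$ is strictly monotone in $p$ and meets the value $B_2(0;X)^2$ exactly at $p=2$. Granting these two facts, for $p>2$ one gets $i\partial\bar\partial\log K_p(0;X)=B_2(0;X)^2>B_p(0;X)^2$, and the reverse for $p<2$, which is the theorem.

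For the first fact I would establish a transformation law for the diagonal kernel under $\mathrm{Aut}(\Omega)$. Since $\Omega$ is complete circular it is star-shaped with respect to $0$ (via $z\mapsto\lambda z$, $|\lambda|\le1$), hence contractible and in particular simply connected, so for every $\phi\in\mathrm{Aut}(\Omega)$ the nonvanishing holomorphic function $\det J_\phi$ admits a single-valued holomorphic power $(\det J_\phi)^{2/p}$. Then $T_\phi f:=(f\circ\phi)\,(\det J_\phi)^{2/p}$ is a bijective linear isometry of $A^p(\Omega)$ by the holomorphic change of variables, and comparing the extremal problems defining $K_p$ at $0$ and at $\phi(0)$ yields $K_p(\phi(0))=K_p(0)\,|\det J_\phi(0)|^{-2}$. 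Because the Jacobian factor on the right does not involve $p$, writing the same identity for $p$ and for $p=2$ gives $K_p(z)/K_p(0)=K_2(z)/K_2(0)$ for all $z$ (homogeneity realizes every $z$ near $0$ as some $\phi(0)$). Hence $K_p=c_p K_2$ for a constant $c_p$, so $\log K_p$ and $\log K_2$ differ by a constant and $i\partial\bar\partial\log K_p\equiv i\partial\bar\partial\log K_2$; by the well-known identity at $p=2$ this equals $B_2(\,\cdot\,;X)^2$, and evaluation at $0$ gives $i\partial\bar\partial\log K_p(0;X)=B_2(0;X)^2$.

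For the second fact I would identify the extremal objects at the center using the circular symmetry, starting from the extremal description $B_p(z;X)=K_p(z)^{-1/p}\sup\{|Xf(z)|:f\in A^p(\Omega),\,f(z)=0,\,\|f\|_p\le1\}$ of the $p$-Bergman metric. Averaging $f\mapsto\frac{1}{2\pi}\int_0^{2\pi}f(e^{i\theta}\,\cdot\,)\,d\theta$ is an $L^p$-contraction fixing the value at $0$, so the minimizer for $m_p(0)$ is the constant $1$ and $K_p(0)=|\Omega|^{-1}$; averaging against $e^{-i\theta}$ extracts the degree-one part and is again an $L^p$-contraction preserving the first-order data at $0$, so in the problem computing $B_p(0;X)$ one may restrict to linear functions $\ell_a(z)=\sum_j a_j z_j$. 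Writing $\mu=dV/|\Omega|$ for normalized Lebesgue measure, the volume factors cancel and one obtains the finite-dimensional formula
\begin{align*}
B_p(0;X)=\Big(\inf_{a(X)=1}\|\ell_a\|_{L^p(\mu)}\Big)^{-1},\qquad a(X)=\sum_j a_jX_j.
\end{align*}
Since $\mu$ is a probability measure, $p\mapsto\|\ell_a\|_{L^p(\mu)}$ is non-decreasing for every fixed $a$, and strictly increasing whenever $\ell_a$ is nonconstant, i.e. $a\neq0$; the constraint $a(X)=1$ forces $a\neq0$ for $X\neq0$. As the infimum is attained on the compact set $\{a:a(X)=1,\ |a|\le R\}$ for suitable $R$ (the objective is coercive and continuous), the infimum is a strictly increasing function of $p$, whence $B_p(0;X)^2$ is strictly decreasing in $p$. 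Comparing with $B_2(0;X)^2$ gives $B_p(0;X)^2<B_2(0;X)^2$ for $p>2$ and the reverse for $p<2$, and combining with the first fact finishes the proof.

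I expect the main obstacle to be the rigorous justification of the transformation law in the first step, namely verifying that $T_\phi$ is a well-defined isometry of $A^p(\Omega)$ for $p\neq2$; this is exactly where the single-valuedness of $(\det J_\phi)^{2/p}$ (hence the simple-connectivity coming from complete circularity) and the homogeneity are essential, and it is the mechanism that pins the a priori $p$-dependent left-hand side to the fixed value $B_2(0;X)^2$. By contrast the monotonicity step is soft once the reduction to linear functions and to the probability measure $\mu$ is in place.
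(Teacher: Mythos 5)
Your proposal is correct and reaches the theorem by the same high-level decomposition as the paper (pin the left-hand side at $B_2(0;X)^2$, then strictly compare $B_p$ with $B_2$), but both halves are implemented differently. For the first half, the paper simply cites \cite[Theorem 2.3, Remark 2.1]{NZZ16} for the identity $K_p\equiv K_2$ on bounded homogeneous complete circular domains, whereas you reprove (a sufficient version of) that cited result from scratch via the transformation law $K_p(\phi(0))=K_p(0)|\det J_\phi(0)|^{-2}$, using simple connectivity to define $(\det J_\phi)^{2/p}$; this makes the argument self-contained at the cost of redoing known work. For the second half, the paper applies H\"older's inequality directly to the extremal function $f_p$ of the infinite-dimensional problem defining $B_p(0;X)$ (resp.\ $f_2$ for $p<2$), with strictness coming from the fact that $|f_p|$ cannot be constant since $f_p(0)=0$, $f_p\not\equiv 0$; you instead reduce the extremal problem to linear functions by circular averaging and then invoke strict monotonicity of $p\mapsto\|\ell_a\|_{L^p(\mu)}$ on the probability space $(\Omega,\mu)$ together with attainment of the infimum. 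Your route buys more — an explicit finite-dimensional formula for $B_p(0;X)$ and strict monotonicity of $B_p(0;X)$ in $p$ for all $p$, not just the comparison with $p=2$ — while the paper's H\"older argument is shorter. One point you should patch: your claim that the two averaging operators are ``$L^p$-contractions'' is justified by Minkowski's integral inequality only for $p\geq 1$; since the theorem also covers $0<p<2$, for $p<1$ you need instead the subharmonicity of $|f(\lambda z)|^p$ (and of $|f(\lambda z)/\lambda|^p$, using $f(0)=0$) in the disc variable $\lambda$ to conclude $\|P_0\|_p\leq\|f\|_p$ and $\|P_1\|_p\leq\|f\|_p$ for the homogeneous components; with that fix, your argument covers the full stated range, and it is already complete for the range $2<p<\infty$ of Chen--Zhang's original problem.
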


Second, by introducing a new iteration technique, we are able to improve the regularity of the off-diagonal $p$-Bergman kernels, namely we improve the order of the H\"older continuity from $\frac{1}{2}$ to $1-\varepsilon$ for any $\varepsilon>0$ and $p>1$.

\begin{thm}\label{thm:main 2}
Let $p\textgreater1$, $\varepsilon\textgreater0,\ S\subseteq\subseteq \Omega$, there exists $C=C(\varepsilon,S)$ such that for $z',z,w\in S$
\begin{align*}
|m_p(z',z)-m_p(z',w)|\leq C|z-w|^{1-\varepsilon}.
\end{align*}
Moreover, the off-diagonal $p$-Bergman kernel $K_p(z,\cdot)$ is  H\"older continuous of order $1-\varepsilon$.
\end{thm}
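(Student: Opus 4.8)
The plan is to reduce the pointwise H\"older estimate to an $L^p$-estimate on the difference of the two minimizers and then to bootstrap that $L^p$-estimate by an iteration. Write $d:=|z-w|$ and $h:=m_p(\cdot,z)-m_p(\cdot,w)\in A^p(\Omega)$, and note that we may assume $d$ small, the case of large $d$ being trivial from the uniform bounds on $m_p$ over $S$. Since $h$ is holomorphic, the sub-mean value inequality gives $|h(z')|\le C_S\|h\|_{L^p(\Omega)}$ uniformly for $z'\in S\Subset\Omega$; because $m_p(z',z)-m_p(z',w)=h(z')$, it therefore suffices to prove $\|h\|_p\le C(\varepsilon,S)\,d^{1-\varepsilon}$. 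The ``moreover'' statement then follows by writing $K_p(z,w)=m_p(z,w)K_p(w)$ and combining this estimate (with $z'=z$) with the local Lipschitz continuity of $K_p(\cdot)$ and the triangle inequality.

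To estimate $\|h\|_p$ I would compare the two minimizers through their defining variational problems. The normalized function $m_p(\cdot,w)/m_p(z,w)$ equals $1$ at $z$, so it competes in the problem defining $m_p(\cdot,z)$; by minimality its norm exceeds $m_p(z)$, so the excess $E:=|m_p(z,w)|^{-1}m_p(w)-m_p(z)\ge 0$ measures the failure of $m_p(\cdot,w)$ to be optimal at $z$. The uniform convexity of $L^p$ for $1<p<\infty$ (Clarkson's inequalities) converts this into closeness: applying the modulus of convexity to $m_p(\cdot,z)$ and the competitor, together with the vanishing of the first variation, i.e. the Euler--Lagrange/reproducing identity $\int_\Omega |m_p(\cdot,z)|^{p-2}\overline{m_p(\cdot,z)}\,g=0$ for $g(z)=0$, yields $\|h\|_p\lesssim E^{1/\max(p,2)}$. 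A symmetric comparison interchanging $z$ and $w$ shows $E$ is controlled by the single scalar defect $1-P$, where $P:=|m_p(z,w)|\,|m_p(w,z)|\le1$.

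The heart of the argument is to estimate $1-P$ to high order in $d$, and this is where the iteration enters. Expanding $P=|1-h(z)|\,|1+h(w)|$ (using $m_p(z,z)=m_p(w,w)=1$), the quadratic contributions are $O(d^2)$: indeed $|h(z)|=|m_p(z,w)-m_p(w,w)|$ and $|h(w)|=|m_p(w,z)-m_p(z,z)|$ are gradient differences of a \emph{single} minimizer, hence $\lesssim d$ by the interior gradient estimate. The only potentially first-order term is $\operatorname{Re}(h(z)-h(w))$, which is a gradient difference of $h$ itself and is thus $\lesssim d\,\|\nabla h\|_{\infty,S}\lesssim d\,\|h\|_p$. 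This produces the closed inequality $1-P\lesssim d\,\|h\|_p+d^2$, so that
\[
\|h\|_p\ \lesssim\ \bigl(d\,\|h\|_p+d^2\bigr)^{1/\max(p,2)} .
\]
For $1<p\le2$ the exponent is $\tfrac12$, and this already forces $\|h\|_p\lesssim d$. In general, feeding a provisional bound $\|h\|_p\lesssim d^{\alpha}$ back in gives $\|h\|_p\lesssim d^{(1+\alpha)/\max(p,2)}$; starting from Chen--Zhang's exponent $\alpha_0=\tfrac12$ and iterating finitely many times drives the exponent past $1-\varepsilon$, with the compounding of constants at each step accounting for the dependence $C=C(\varepsilon,S)$ and for the loss of the endpoint $\varepsilon=0$.

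The main obstacle is twofold. First, the cancellation that leaves only the benign combination $d\,\|h\|_p+d^2$ must be made rigorous for general $p$: unlike the Hilbertian case $p=2$ there is no explicit kernel, so I would have to extract it solely from the nonlinear Euler--Lagrange identity and the reproducing formula, controlling all error terms in the expansion of $P$ uniformly on $S$. Second, and more seriously, for $p>2$ the modulus of convexity of $L^p$ is only of power type $p$, so a single application of uniform convexity caps the exponent at $1/(p-1)<1$ and the naive recursion $\alpha\mapsto(1+\alpha)/p$ stalls exactly at that fixed point; reaching $1-\varepsilon$ for large $p$ therefore requires the sharper, weighted form of convexity (equivalently, working through the conjugate exponent $q=p/(p-1)<2$, where the quadratic modulus is available) so that the effective recursion becomes $\alpha\mapsto(1+\alpha)/2$ and still converges to $1$. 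Designing the iteration so that this gain survives the transfer back to $\|h\|_p$ is the technical core of the theorem.
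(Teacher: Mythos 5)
For $1<p\le 2$ your argument is correct and is in essence the paper's own proof: the paper's defect functional is $H_p(z,w)=K_p(z)+K_p(w)-\mathrm{Re}\{K_p(z,w)+K_p(w,z)\}$ (an equivalent bookkeeping of your $1-P$), the convexity input $\|h\|_p\lesssim H_p^{1/2}$ is quoted from Chen--Zhang's Lemma 4.5 rather than rederived from Clarkson, and the reverse bound $H_p\lesssim |z-w|\,\|h\|_p$ is proved exactly as you propose (Cauchy estimate for $h(z)-h(w)$, sub-mean value for $|h(z)|$, local Lipschitz continuity of $K_p(\cdot)$), after which the paper runs the recursion $\alpha\mapsto(1+\alpha)/2$ from $\alpha_0=\tfrac12$. (Your observation that the closed inequality $\|h\|_p^2\lesssim d\|h\|_p+d^2$ directly yields the Lipschitz bound is correct and in fact slightly sharper than the stated $1-\varepsilon$ in this range of $p$.)

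The genuine gap is the case $p>2$, i.e.\ half of the theorem. You correctly identify the obstruction --- the modulus of convexity of $L^p$ is only of power type $p$, so the recursion $\alpha\mapsto(1+\alpha)/p$ stalls at $1/(p-1)$ --- but the proposed fix (``weighted convexity'' / ``working through the conjugate exponent'') is a hope, not an argument; note in particular that the minimizers live in $L^p$, not in $L^{p/(p-1)}$, so there is no norm in the conjugate exponent to which your comparison applies. The way the paper closes this is by citing the proof of Chen--Zhang's Theorem 4.7: for $p>2$ one uses the pointwise weighted inequality $|a-b|^2(|a|+|b|)^{p-2}\le C_p\bigl(|a|^p+|b|^p-2|\tfrac{a+b}{2}|^p\bigr)$ and then H\"older's inequality against negative powers of the weight $|m_p(\cdot,z)|+|m_p(\cdot,w)|$, which are integrable on a slightly smaller open set $U$ with $S\Subset U\Subset\Omega$. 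This produces, for some small $\alpha=\alpha(p,S,U)>0$, the estimate
\begin{equation*}
\int_U|m_p(\cdot,z)-m_p(\cdot,w)|^{\alpha}\le C\,H_p(z,w)^{\alpha/2},
\end{equation*}
i.e.\ the square-root power is recovered, at the price of a weaker norm $\|h\|_{L^\alpha(U)}$. That weaker norm costs nothing in your scheme: the sub-mean value inequality applied to the plurisubharmonic function $|h|^{\alpha}$ still gives $|h(z')|\lesssim\|h\|_{L^\alpha(U)}$ for $z'\in S$, and the Cauchy estimate still gives $|h(z)-h(w)|\lesssim|z-w|\,\|h\|_{L^\alpha(U)}$, so the defect bound becomes $H_p\lesssim |z-w|\,\|h\|_{L^\alpha(U)}$ and the recursion $\alpha\mapsto(1+\alpha)/2$ runs exactly as before. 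Without this weighted-convexity lemma (proved from scratch or quoted), your proposal proves the theorem only for $1<p\le2$.
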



It is proved in \cite[Proposition 2.4, Proposition 2.5]{CZ22} that for $p\geq1$ the maximizer $f$ of $K_p(z)$ is unique under the condition $f(z)=1$. Actually, it is precisely $m_p(\cdot,z)$. But the uniqueness of the maximizer of $K_p(z)$ for $0<p<1$  is not known. We study the asymptotic behavior of the maximizers of $K_p(z)$ as $p\rightarrow 1^-$ and get the following

\begin{thm}\label{thm: main 3}
Let $p\textless1$, we define the metric $d(f,g):=\int_{\Omega}|f-g|^p$ on $A^p(\Omega)$. Denote $d_p(z):=\sup\{ d(f_p,g_p)\}$, where $\sup$ is taken over all pairs of maximizers $f_p,g_p$ of $K_p(z)$ satisfying $f_p(z)=g_p(z)=1$. Then, it holds that $$\forall z\in{\Omega}, \lim_{p \to 1^-}d_p(z)=0.$$
\end{thm}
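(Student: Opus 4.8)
The plan is to argue by contradiction, combining a normal-family compactness argument with the uniqueness of the minimizer at $p=1$. Suppose the conclusion fails at some $z\in\Omega$. Then there exist $\delta>0$, a sequence $p_k\uparrow 1$, and pairs of maximizers $f_{p_k},g_{p_k}$ of $K_{p_k}(z)$ with $f_{p_k}(z)=g_{p_k}(z)=1$ and $\int_\Omega|f_{p_k}-g_{p_k}|^{p_k}\geq\delta$. Since normalizing a maximizer of $K_p(z)$ by $f(z)=1$ is the same as minimizing $\|f\|_p$ subject to $f(z)=1$, every such maximizer satisfies $\int_\Omega|f_{p_k}|^{p_k}=m_{p_k}(z)^{p_k}$, and likewise for $g_{p_k}$.

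First I would establish a uniform bound making the family normal. Since $\Omega$ is bounded, Hölder's inequality with exponent $1/p>1$ gives $\|f\|_p^p\leq\|f\|_1^p\,\mathrm{vol}(\Omega)^{1-p}$ for every $f\in A^1(\Omega)$; testing the definition of $m_p(z)$ against the $L^1$-minimizer $m_1(\cdot,z)$ yields $m_p(z)^p\leq m_1(z)^p\,\mathrm{vol}(\Omega)^{1-p}$, which stays bounded as $p\uparrow1$. Because $|h|^p$ is plurisubharmonic for every holomorphic $h$, the sub-mean-value inequality over balls $B(a,r)\Subset\Omega$ converts the integral bound $\int_\Omega|f_{p_k}|^{p_k}=m_{p_k}(z)^{p_k}\leq C$ into a locally uniform pointwise bound on $f_{p_k}$ (and on $g_{p_k}$). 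Montel's theorem then lets me pass to a subsequence along which $f_{p_k}\to f$ and $g_{p_k}\to g$ locally uniformly, with $f,g$ holomorphic and $f(z)=g(z)=1$.

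Next I would identify the limits. For any compact $S\Subset\Omega$, Hölder again gives $\int_S|f_{p_k}|\leq\bigl(\int_S|f_{p_k}|^{p_k}\bigr)^{1/p_k}\mathrm{vol}(S)^{(p_k-1)/p_k}\leq m_{p_k}(z)\,\mathrm{vol}(S)^{(p_k-1)/p_k}$; letting $k\to\infty$ and then exhausting $\Omega$ by such $S$ shows $\|f\|_1\leq\limsup_k m_{p_k}(z)\leq m_1(z)$. Since $f(z)=1$, the definition of $m_1(z)$ forces $\|f\|_1=m_1(z)$, so $f$ is an $L^1$-minimizer; by the uniqueness at $p=1$ recalled in the introduction, $f=m_1(\cdot,z)$, and the same argument gives $g=m_1(\cdot,z)$, whence $f=g$. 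Matching this lower bound against the upper bound above also sharpens the estimate to $\lim_k m_{p_k}(z)^{p_k}=m_1(z)=\int_\Omega|m_1(\cdot,z)|$.

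Finally I would upgrade the local convergence $f_{p_k}-g_{p_k}\to 0$ to convergence in the metric $d$, which is where the only real difficulty lies: local uniform convergence does not by itself control the integral over all of $\Omega$, since mass could in principle escape to the boundary. To rule this out I would prove tightness: on each compact $S$ the integrands converge uniformly, so $\int_S|f_{p_k}|^{p_k}\to\int_S|m_1(\cdot,z)|$, and since the total masses converge to $\int_\Omega|m_1(\cdot,z)|$, the tails satisfy $\int_{\Omega\setminus S}|f_{p_k}|^{p_k}\to\int_{\Omega\setminus S}|m_1(\cdot,z)|$, which can be made smaller than any $\eta>0$ by taking $S$ large (as $m_1(\cdot,z)\in L^1$); the same holds for $g_{p_k}$. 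Splitting $\int_\Omega|f_{p_k}-g_{p_k}|^{p_k}$ over $S$ and $\Omega\setminus S$, using $\int_S|f_{p_k}-g_{p_k}|^{p_k}\to 0$ on the compact part and the subadditivity $(a+b)^{p_k}\leq a^{p_k}+b^{p_k}$ valid for $p_k<1$ on the tail, I obtain $\limsup_k\int_\Omega|f_{p_k}-g_{p_k}|^{p_k}\leq 2\eta$ for every $\eta>0$, hence $d(f_{p_k},g_{p_k})\to 0$. This contradicts $d(f_{p_k},g_{p_k})\geq\delta$ and proves the theorem.
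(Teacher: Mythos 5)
Your overall strategy coincides with the paper's: a normal-family argument producing locally uniform limits, identification of the limit with the unique $L^1$ minimizer $m_1(\cdot,z)$, a tightness argument showing no mass escapes to the boundary, and a final contradiction via the subadditivity $(a+b)^{p}\leq a^{p}+b^{p}$ for $p<1$. Your normal-family step and your tightness/splitting step are correct and essentially match the paper's proof.

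However, there is a genuine error in your step identifying the limits. You assert that H\"older's inequality gives $\int_S|f_{p_k}|\leq\bigl(\int_S|f_{p_k}|^{p_k}\bigr)^{1/p_k}\mathrm{vol}(S)^{(p_k-1)/p_k}$. This is H\"older written backwards: for $p<1$ the legitimate inequality is $\int_S|h|^{p}\leq\bigl(\int_S|h|\bigr)^{p}\mathrm{vol}(S)^{1-p}$, which rearranges to $\int_S|h|\geq\bigl(\int_S|h|^{p}\bigr)^{1/p}\mathrm{vol}(S)^{(p-1)/p}$ --- the reverse of what you wrote. Your version is false in general: taking $h=M\chi_E$ with $|E|$ small shows the $L^1$ mass cannot be dominated by the $L^p$ quasi-norm when $p<1$. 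Consequently your chain $\|f\|_1\leq\limsup_k m_{p_k}(z)\leq m_1(z)$, and with it the ``sharpened'' claim $\lim_k m_{p_k}(z)^{p_k}=m_1(z)$ on which your tightness step depends, is unjustified as written. The gap can be repaired in either of two ways. The paper's way: fix $s<1$, use Fatou to get $\int|f|^s\leq\liminf_k\int|f_{p_k}|^s$, then apply H\"older in the valid direction (comparing exponent $s$ with $p_k>s$) to get $\int|f_{p_k}|^s\leq\bigl(\int|f_{p_k}|^{p_k}\bigr)^{s/p_k}|\Omega|^{1-s/p_k}$, invoke the continuity $K_{p}(z)\to K_1(z)$ as $p\to1^-$ (quoted in the paper from \cite[Proposition 6.1(1)]{CZ22} rather than rederived), and finally let $s\to1^-$ by monotone convergence to conclude $\|f\|_1\leq K_1(z)^{-1}=m_1(z)$. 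Alternatively, stay on compact sets and use the locally uniform bound $M_S:=\sup_k\sup_S|f_{p_k}|<\infty$ from your own normal-family step to write $|f_{p_k}|\leq|f_{p_k}|^{p_k}\max(M_S,1)^{1-p_k}$ on $S$; since $\max(M_S,1)^{1-p_k}\to1$ and $\int_\Omega|f_{p_k}|^{p_k}=m_{p_k}(z)^{p_k}\leq m_1(z)^{p_k}|\Omega|^{1-p_k}$, this gives $\int_S|f|\leq m_1(z)$ for every compact $S$, hence $\|f\|_1\leq m_1(z)$, and it also yields the total-mass convergence your tightness step needs. With either repair, the rest of your proof goes through.
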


Finally, we study $L^p$ Bergman space $A^p(\mathbb B ^1)$ on the unit disk $\mathbb B^1$. A charcterization  for a class of holomorphic functions on $\mathbb B^1$ to be $L^p$-integrable is established as follows.

\begin{thm}\label{thm: main 4}
Let $p\textgreater0$, there exists $C=C(p,A)$ such that, if $f\in \mathcal O(\mathbb{B}^1)$, $f(z)=\sum_{k=1}^{\infty}a_{\lambda_k}z^{\lambda_k}$ for some lacunary sequence $\{\lambda_k\}$ with constant $A$, 
\begin{align*}
C(p,A)^{-1}\int_0^1(\sum_{k=1}^{\infty}|a_{\lambda_k}r^{2\lambda_k}|)^{\frac{p}{2}}dr\leq \int_{\mathbb{B}^1}|f|^p\leq C(p,A)\int_0^1(\sum_{k=1}^{\infty}|a_{\lambda_k}r^{2\lambda_k}|)^{\frac{p}{2}}dr.
\end{align*}
In particular, a holomorphic function $f(z)=\sum_{k=1}^{\infty}a_{\lambda_k}z^{\lambda_k}$ for some lacunary sequence $\{\lambda_k\}$ with constant $A$ is $L^p$-integrable if and only if the integration  $\int_0^1(\sum_{k=1}^{\infty}|a_{\lambda_k}r^{2\lambda_k}|)^{\frac{p}{2}}dr$ is finite.

\end{thm}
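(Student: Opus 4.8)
The plan is to pass to polar coordinates $z=re^{i\theta}$ and reduce the two-dimensional estimate to a one-parameter family of purely angular estimates, to which the classical theory of Hadamard lacunary series applies. Since Lebesgue measure on $\mathbb{B}^1$ is $r\,dr\,d\theta$,
\[
\int_{\mathbb{B}^1}|f|^p=\int_0^1\Big(\int_0^{2\pi}\big|f(re^{i\theta})\big|^p\,d\theta\Big)\,r\,dr ,
\]
and for each fixed $r$ the function $\theta\mapsto f(re^{i\theta})=\sum_{k}\big(a_{\lambda_k}r^{\lambda_k}\big)e^{i\lambda_k\theta}$ is again lacunary with the \emph{same} gap constant $A$, its coefficients being $a_{\lambda_k}r^{\lambda_k}$. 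Thus everything reduces to estimating the inner integral, uniformly in $r$.

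The key tool is the $L^p$–$L^2$ equivalence for lacunary trigonometric series: if $\{\lambda_k\}$ is lacunary with constant $A$, then for every $p>0$ there are constants $0<c_{p,A}\le C_{p,A}<\infty$, depending only on $p$ and $A$, with
\[
c_{p,A}\Big(\sum_k|c_k|^2\Big)^{1/2}\le\Big(\frac{1}{2\pi}\int_0^{2\pi}\Big|\sum_k c_ke^{i\lambda_k\theta}\Big|^p d\theta\Big)^{1/p}\le C_{p,A}\Big(\sum_k|c_k|^2\Big)^{1/2}
\]
for every $(c_k)\in\ell^2$. Taking $c_k=a_{\lambda_k}r^{\lambda_k}$ yields, for each $r$,
\[
\int_0^{2\pi}\big|f(re^{i\theta})\big|^p\,d\theta\ \asymp\ \Big(\sum_k|a_{\lambda_k}|^2r^{2\lambda_k}\Big)^{p/2},
\]
the comparison constants being independent of $r$ because $A$ — and hence $c_{p,A},C_{p,A}$ — is unchanged. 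Here the easy inequalities come from Hölder/Jensen on the probability space $d\theta/2\pi$ (giving $\|\cdot\|_p\le\|\cdot\|_2$ for $p\le2$ and $\|\cdot\|_2\le\|\cdot\|_p$ for $p\ge2$, with no use of lacunarity); the substantive content, and the crux of the argument, is the reverse, lacunary inequality — the lower bound by $(\sum|c_k|^2)^{1/2}$ when $0<p<2$ and the upper bound by it when $p>2$ — which is precisely the statement that lacunary frequency sets are $\Lambda(p)$-sets for every $p$, a Khinchin-type phenomenon valid for all $p>0$ (so that the range $0<p<1$, where $\|\cdot\|_p$ is only a quasi-norm, is covered as well).

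Finally I would integrate the pointwise-in-$r$ comparison against $r\,dr$ over $(0,1)$; since the constants do not depend on $r$ they pass through the integral and give
\[
\int_{\mathbb{B}^1}|f|^p\ \asymp\ \int_0^1\Big(\sum_k|a_{\lambda_k}|^2r^{2\lambda_k}\Big)^{p/2}\,r\,dr,
\]
which is the asserted two-sided bound; the area weight $r\,dr$ may be replaced by $dr$ at the cost of a universal factor, since the integrand is non-negative and non-decreasing in $r$. The characterization of $L^p$-integrability is then immediate: $\int_{\mathbb{B}^1}|f|^p<\infty$ exactly when the radial integral on the right is finite. The only remaining bookkeeping is the (harmless) use of Tonelli's theorem to split the double integral and a check that all constants depend solely on $p$ and $A$; the genuine difficulty is concentrated entirely in the lacunary lower/upper bound cited above.
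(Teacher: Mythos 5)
Your proof is correct and follows essentially the same route as the paper: pass to polar coordinates, apply the lacunary $L^p$--$L^2$ norm equivalence on each circle $\{|z|=r\}$ with constants depending only on $p$ and $A$, and integrate radially. The only differences are cosmetic: the paper cites Grafakos' equivalence for $1\le p<\infty$ and extends it to $0<p<1$ via the H\"older trick $\int_T|f|\le(\int_T|f|^q)^{1/2}(\int_T|f|^{2-q})^{1/2}$, whereas you invoke the full-range ($p>0$) Khinchin-type equivalence as a single black box; and you treat the Jacobian weight $r\,dr$ explicitly (the paper silently drops it), while both you and the paper's own proof establish the intended quantity $\sum_k|a_{\lambda_k}|^2r^{2\lambda_k}$, the placement of the exponent in the theorem statement being a typo.
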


\begin{rem}
Theorem \ref{thm: main 4} can also be used to give a similar characterization of a class of  holomorphic functions on the punctured disk to be $L^p$-integrable by considering the Laurent expansions.
\end{rem}

The structure of this paper is organized as follows. In \S \ref{sect: open prob}, we answer the open problem raised by Chen-Zhang, and prove Theorem \ref{thm: main 1}. In \S \ref{sect: regularity }, we prove the off-diagonal $p$-Bergman kernel is H\"older continuous of order $1-\varepsilon$, i.e.  Theorem \ref{thm:main 2} . In \S \ref{sect: asymptotic}, we study the asymptotic behavior of the maximizer of the $p$-Bergman kernel as $p\rightarrow 1^-$, i.e. Theorem \ref{thm: main 3}. Finally, in \S \ref{sect: char}, we give a characterization of a class of holomorphic functions on the unit disk to be $L^p$-integrable, i.e. Theorem \ref{thm: main 4}.

$\mathbf{Acknowlegements.}$
The author would like to express his sincere gratitude to Professor Zhiwei Wang and Professor Xiangyu Zhou for their guidence and encouragements. This research is supported by National Key R\&D Program of China (No. 2021YFA1002600).

\section{Chen-Zhang's problem}\label{sect: open prob}

In this section, we  answer the  Problem \ref{Prob} raised by Chen-Zhang.

\begin{definition}
A domain $\Omega\subseteq\mathbb{C}^n$ is said to be complete circular and bounded  homogeneous, if $\forall z\in\mathbb{C}^n,t\in\mathbb{C},|t|\leq1$, we have $tz\in\Omega.$
\end{definition}

We restate Theroem \ref{thm: main 1} as follows.
\begin{thm}
Let $\Omega$ be complete circular and bounded homogeneous domain in $\mathbb{C}^n$, we have for $X\neq0$,
\begin{align*}
i\partial\bar{\partial} K_p(0;X)\textgreater B_p(0;X)^2,\ p\textgreater2,
\end{align*}
\begin{align*}
i\partial\bar{\partial} K_p(0;X)\textless B_p(0;X)^2,\ p\textless2.
\end{align*}
\end{thm}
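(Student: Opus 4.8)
The plan is to show that, at the centre of a complete circular domain, the generalized Levi form $i\partial\bar\partial\log K_p(0;X)$ (which is what $i\partial\bar\partial K_p(0;X)$ abbreviates) is \emph{independent of $p$} and equals $B_2(0;X)^2$, whereas $B_p(0;X)^2$ is \emph{strictly decreasing} in $p$; together these facts give both inequalities.

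\emph{Step 1: reduction to one variable.} Fix $X\neq 0$ and restrict to the line $\mathbb C X$. Since $\Omega$ is circular, $f\mapsto f(e^{i\theta}\,\cdot\,)$ is an isometry of $A^p(\Omega)$, whence $K_p(e^{i\theta}z)=K_p(z)$ and $K_p(\zeta X)$ depends only on $|\zeta|$. As $\partial_\zeta=\sum_j X_j\partial_{z_j}$, the chain rule gives
\begin{align*}
i\partial\bar\partial\log K_p(0;X)=\partial_\zeta\bar\partial_{\bar\zeta}\log K_p(\zeta X)\big|_{\zeta=0},
\end{align*}
which for a function of $|\zeta|$ equals the coefficient of $|\zeta|^2$ in the expansion of $\log K_p(\zeta X)$. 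Thus it suffices to expand $K_p(\zeta X)=m_p(\zeta X)^{-p}$ to order $|\zeta|^2$.

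\emph{Step 2: the Levi form equals $B_2(0;X)^2$.} Put $V:=|\Omega|$; the minimizer at $0$ is the constant $1$, so $m_p(0)^p=V$. For $p\ge 1$ the minimizer $m_p(\cdot,\zeta X)$ is unique, and invariance of the extremal problem under $z\mapsto e^{i\theta}z$ together with uniqueness forces its degree-$k$ homogeneous part to be $\bar\zeta^{\,k}$ times a series in $|\zeta|^2$; in particular $m_p(\cdot,\zeta X)=1+\bar\zeta\,\ell+|\zeta|^2 C+O(|\zeta|^3)$ with $\ell$ linear and, from $m_p(\zeta X,\zeta X)=1$, $C=-\ell(X)$. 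Inserting this into $\int_\Omega|1+w|^p$, using $|1+w|^p=1+p\,\mathrm{Re}\,w+\tfrac p2|w|^2+\tfrac{p(p-2)}2(\mathrm{Re}\,w)^2+O(|w|^3)$ and $\int_\Omega\ell=\int_\Omega\ell^2=0$ (circular symmetry), one obtains
\begin{align*}
m_p(\zeta X)^p=V+|\zeta|^2\Big(-pV\,\mathrm{Re}\,\ell(X)+\tfrac{p^2}{4}\textstyle\int_\Omega|\ell|^2\Big)+O(|\zeta|^3).
\end{align*}
Minimizing the bracket over linear $\ell$ by completing the square in the $L^2(\Omega)$ inner product makes the $p$-dependence cancel and gives the value $-V^2\kappa$, where $\kappa:=\sup_{\ell\ \mathrm{linear}}|\ell(X)|^2/\int_\Omega|\ell|^2$. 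Hence $K_p(\zeta X)=\tfrac1V+\kappa|\zeta|^2+O(|\zeta|^3)$, so
\begin{align*}
i\partial\bar\partial\log K_p(0;X)=V\kappa=B_2(0;X)^2 .
\end{align*}
The matching lower bound comes for free from the test functions $1+\lambda\ell$, so the value is exact.

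\emph{Step 3: $B_p(0;X)^2$ is strictly decreasing.} Recall $B_p(z;X)=K_p(z)^{-1/p}\sup\{|Xf(z)|:\ \|f\|_p\le 1,\ f(z)=0\}$. Averaging a competitor over rotations, $F(z)=\tfrac1{2\pi}\int_0^{2\pi}e^{-i\theta}f(e^{i\theta}z)\,d\theta$, returns its linear part and does not increase the $L^p$ norm for $p\ge 1$, so the extremum is attained on linear functions and $B_p(0;X)^2=V^{2/p}\big(\inf\{\|\ell\|_{L^p(\Omega)}:\ell \text{ linear},\ \ell(X)=1\}\big)^{-2}=:G(p)$. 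With $d\mu:=dV/V$ a probability measure the volume factors cancel and $G(p)^{1/2}=\sup_{\ell(X)=1}\|\ell\|_{L^p(\mu)}^{-1}$; since $p\mapsto\|\ell\|_{L^p(\mu)}$ is non-decreasing for each fixed $\ell$, $G$ is non-increasing. For strictness, if $p_1<p_2$ and $\ell_2$ attains the infimum at $p_2$, then $\ell_2$ is a nonzero (hence non-constant on the open set $\Omega$) linear function, so $\|\ell_2\|_{L^{p_1}(\mu)}<\|\ell_2\|_{L^{p_2}(\mu)}$ and therefore $G(p_2)<G(p_1)$.

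\emph{Conclusion and main obstacle.} Since $i\partial\bar\partial\log K_p(0;X)=G(2)$ while $G$ is strictly decreasing, for $p>2$ we get $i\partial\bar\partial\log K_p(0;X)=G(2)>G(p)=B_p(0;X)^2$, and for $p<2$ the reverse; note that for $p>2$ only the lower bound $i\partial\bar\partial\log K_p(0;X)\ge B_2(0;X)^2$ is needed, and for $p<2$ only the upper bound. The main obstacle is the rigorous second-order expansion in Step 2: controlling the $O(|\zeta|^3)$ remainder and the convergence of the homogeneous series, which rests on the continuity of $m_p(\cdot,w)$ in the parameter $w$—precisely the regularity furnished by Theorem \ref{thm:main 2} and the H\"older estimates of Chen–Zhang.
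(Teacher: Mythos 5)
Your overall architecture is the right one: show the generalized Levi form $i\partial\bar\partial\log K_p(0;X)$ is independent of $p$ and equals $B_2(0;X)^2$, then show $B_p(0;X)^2$ is strictly monotone in $p$. Your Step 3 is essentially sound for $p\geq 1$ and is a variant of the paper's own comparison (the paper applies H\"older's inequality directly to the extremal function $f_p$, with strictness because $f_p(0)=0$ forces $|f_p|$ to be nonconstant; your rotation-averaging reduction to linear functions plus monotonicity of $L^p(\mu)$-norms gives the same conclusion, though Minkowski's integral inequality restricts your averaging step to $p\geq1$, whereas the H\"older argument covers all $p>0$). The genuine gap is Step 2, and it is not a removable technicality. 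Your expansion $m_p(\cdot,\zeta X)=1+\bar\zeta\,\ell+|\zeta|^2C+O(|\zeta|^3)$ presupposes second-order differentiability of $w\mapsto m_p(\cdot,w)$, while the only available regularity --- Chen--Zhang's H\"older $\tfrac12$ estimate in \cite{CZ22}, or the H\"older $1-\varepsilon$ estimate of Theorem \ref{thm:main 2} --- is far below $C^2$: H\"older bounds cannot produce a two-term Taylor expansion with controlled remainder, so what you call the ``main obstacle'' is in fact the entire difficulty. Worse, your structural argument (``invariance plus uniqueness forces the form of the homogeneous parts'') invokes uniqueness of the extremal function, which is unknown for $p<1$ --- this is precisely the issue behind Theorem \ref{thm: main 3} of the paper --- so the argument cannot even begin on part of the range $p<2$. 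Note the asymmetry in what you need: explicit test functions rigorously give the \emph{lower} bound $i\partial\bar\partial\log K_p(0;X)\geq B_2(0;X)^2$ (since $K_p(0)=1/V$ exactly), so your $p>2$ half is salvageable; but for $p<2$ you need the \emph{upper} bound, i.e.\ control over all competitors, and that is exactly what your expansion fails to deliver.

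The paper closes this gap with a citation rather than a computation: by Ning--Zhang--Zhou \cite{NZZ16}, on a bounded complete circular domain one has $K_p(\cdot)\equiv K_2(\cdot)$ as functions on $\Omega$ for all $p>0$. Hence $i\partial\bar\partial\log K_p(z;X)=i\partial\bar\partial\log K_2(z;X)$ identically, and at $z=0$ this equals $B_2(0;X)^2$ by the classical $p=2$ identity; no expansion of $m_p(\cdot,w)$ in $w$ and no uniqueness statement is needed. If you replace your Step 2 by this identity and keep (either version of) the strict comparison of $B_p$ with $B_2$, your proof becomes correct and essentially coincides with the paper's.
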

\begin{proof}
It follows from \cite[Theorem 2.3,Remark 2.1]{NZZ16} that, on $\Omega$,  we have $K_p(\cdot)=K_2(\cdot),\forall p\textgreater0$. In particular, $K_p(0)=K_2(0)=\frac{1}{\mathrm{vol}(\Omega)}$. It is clear that
\begin{align*}
i\partial\bar{\partial}\log K_p(z;X)=i\partial\bar{\partial}\log K_2(z;X).
\end{align*}
In the following, we  prove that
\begin{align*}
&B_p(z;X)\textless B_2(z;X), \ p\textgreater2\\
&B_p(z;X)\textless B_2(z;X), \ p\textless2.
\end{align*}
Recall the definition, $B_p(z;X):=K_p(z)^{-\frac{1}{p}}\cdot\sup_{f\in A^p(\Omega),f(z)=0,||f||_p\textgreater0}\frac{|Xf(z)|}{||f||_p}$. By a normal family argument we know that there exists maximizer of $B_p(0;X)$ and denote it by $f_p$. It follows from H\"older inequality that
\begin{align*}
||f_p||_p^2\cdot||1||_p^{p-2}\geq||f_p||_2^2,\ p\textgreater2.
\end{align*}
However the equality can not be achieved since $f_p\not\equiv 1$. Thus we get that
\begin{align*}
B_p(0;X)^2&=K_p(0)^{-\frac{2}{p}}\cdot\frac{|Xf_p(0)|^2}{||f_p||_p^2}\\
&\textless K_2(0)^{-1}\cdot\frac{|Xf_p(0)|^2}{||f_p||_2^2}\\
&\leq B_2(0;X).
\end{align*}
The case that $p\textless2$ can be proved by the same method.
\end{proof}

\section{H\"older continuity of $m_p(z,\cdot)$}\label{sect: regularity }
In this section, we prove the off-diagonal $p$-Bergman kernel is H\"older continuous of order $1-\varepsilon$
for any $\varepsilon>0$. More precisely, we prove the following
\begin{thm}\label{thm:holder con}
Let $p\textgreater1$, $\varepsilon\textgreater0,\ S\subseteq\subseteq \Omega$, there exists $C=C(\varepsilon,S)$ such that for $z',z,w\in S$
\begin{align*}
|m_p(z',z)-m_p(z',w)|\leq C|z-w|^{1-\varepsilon}.
\end{align*}
\end{thm}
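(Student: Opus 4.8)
The plan is to control the single holomorphic function $h:=m_p(\cdot,z)-m_p(\cdot,w)$ and then read off the pointwise statement from an interior estimate. Writing $u_z:=m_p(\cdot,z)$, so that $u_z(z)=1$, $\|u_z\|_p=m_p(z)$ and $m_p(z',z)=u_z(z')$, I first record the uniform background facts on a fixed chain $S\subset\subset S'\subset\subset S''\subset\subset\Omega$: $m_p(z)$ is bounded above and below and is Lipschitz in $z$, the $u_z$ are uniformly bounded in $A^p$, and by the Cauchy/sub-mean-value inequality $\sup_{S'}|h'|\le C\sup_{S''}|h|\le C\|h\|_{L^p(\Omega)}$ and $|u_w(z)-1|=|u_w(z)-u_w(w)|\le C|z-w|$. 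In particular the target estimate $|u_z(z')-u_w(z')|=|h(z')|\le C\|h\|_{L^p(\Omega)}$ reduces everything to bounding $\|h\|_{L^p(\Omega)}$ by a good power of $\delta:=|z-w|$.

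The engine is an exact identity coming from the variational characterisation of the minimiser. For $p\ge1$ the Euler--Lagrange equation reads $\int_\Omega|u_z|^{p-2}\bar u_z\,g=0$ for every $g\in A^p(\Omega)$ with $g(z)=0$ (this is the reproducing formula of Chen--Zhang). Choosing the admissible competitors $g=u_w-u_w(z)u_z$ at $z$ and $g=u_z-u_z(w)u_w$ at $w$ and expanding, I would obtain
\[
\int_\Omega\big\langle|u_z|^{p-2}u_z-|u_w|^{p-2}u_w,\;u_z-u_w\big\rangle
=m_p(z)^p\big(1-\mathrm{Re}\,u_w(z)\big)+m_p(w)^p\big(1-\mathrm{Re}\,u_z(w)\big),
\]
where $\langle\cdot,\cdot\rangle$ is the real inner product on $\mathbb C$. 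The right-hand side $R$ is built only from the swapped off-diagonal values $u_w(z)=m_p(z,w)$ and $u_z(w)=m_p(w,z)$, i.e.\ from quantities that are $1$ up to $O(\delta)$.

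Next I would squeeze the identity from both sides. On the left, the vector $p$-Laplacian monotonicity gives $\langle|a|^{p-2}a-|b|^{p-2}b,a-b\rangle\ge c_p|a-b|^p$ for $p\ge2$ and $\ge c_p(|a|+|b|)^{p-2}|a-b|^2$ for $1<p\le2$; integrating and, in the second case, applying one H\"older step based on $\int_\Omega(|u_z|+|u_w|)^p\le C$, this gives $c\,\|h\|_p^{\max(2,p)}\le R$. On the right, the identity rewrites as $R=\mathrm{Re}\big[m_p(z)^p h(z)-m_p(w)^p h(w)\big]$, and using $|h(z)-h(w)|\le\delta\sup_{S'}|h'|$, the Cauchy estimate $\sup_{S'}|h'|\le C\sup_{S''}|h|$, the Lipschitz bound on $m_p$ and $|h(w)|\le\sup_{S''}|h|$, I would get $R\le C\delta\sup_{S''}|h|$. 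Feeding in the modulus of continuity already available, $\sup_{S''}|h|\le C\delta^\alpha$, yields $R\le C\delta^{1+\alpha}$ and hence $\|h\|_p\le C\delta^{(1+\alpha)/\max(2,p)}$; i.e.\ a known exponent $\alpha$ self-improves to $(1+\alpha)/\max(2,p)$. For $1<p\le2$ the map $\alpha\mapsto(1+\alpha)/2$ iterates from Chen--Zhang's $\alpha_0=\tfrac12$ toward the fixed point $1$, never reaching it, which is exactly H\"older continuity of every order $1-\varepsilon$.

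The hard part will be $p>2$. There the same iteration has fixed point $1/(p-1)<1$, because $L^p$ is only $p$-uniformly convex and the coercive lower bound $\|h\|_p^p$ is too weak; no amount of bootstrapping of $R\lesssim\delta^{1+\alpha}$ can beat $1/(p-1)$ through the power-$p$ inequality. The quadratic alternative is available --- the monotonicity also gives $\int_\Omega(|u_z|+|u_w|)^{p-2}|h|^2\le CR\le C\delta^{1+\alpha}$, which is genuinely quadratic in $h$ --- but the weight $(|u_z|+|u_w|)^{p-2}$ degenerates precisely at the (possible) common zeros of the extremal functions, so it does not dominate $\|h\|_{L^p(\Omega)}$, while the interior estimate needs a global $L^p$ bound to reach an evaluation point $z'$ far from $z,w$. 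Converting this weighted, degenerate control into an honest power of $\delta$ for $\|h\|_{L^p}$ --- exploiting that $|h|\le|u_z|+|u_w|$ forces $h$ to be small exactly where the weight is small, together with the holomorphicity of $h$ to propagate local $L^2$ smallness --- is the step I expect to carry the whole argument, and the one where the new iteration must do its real work.
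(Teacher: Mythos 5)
Your treatment of $1<p\le 2$ is sound and is, in substance, the paper's own argument: the Euler--Lagrange identity you derive, combined with the monotonicity inequality $\langle|a|^{p-2}a-|b|^{p-2}b,\,a-b\rangle\ge c_p(|a|+|b|)^{p-2}|a-b|^2$ and one H\"older step, reproduces precisely the estimate the paper imports from the proof of Chen--Zhang's Lemma 4.5 (your $R$ playing the role of the paper's $H_p(z,w)$, both being controlled by $C|z-w|\sup_{S''}|h|$ via the Lipschitz continuity of $K_p$ together with Cauchy and sub-mean-value estimates, and both controlling $\|h\|_{L^p(\Omega)}^2$ from below). Incidentally, at this point both you and the paper give away more than necessary: since $\sup_{S''}|h|\le C\|h\|_{L^p(\Omega)}$, the two inequalities $\|h\|_p^2\le CR$ and $R\le C|z-w|\,\|h\|_p$ close in a single algebraic step, $\|h\|_p\le C|z-w|$, so no infinite iteration toward the fixed point $1$ is needed and one in fact gets a Lipschitz bound.

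The genuine gap is exactly where you flag it: $p>2$. You correctly diagnose that the $L^p$-coercive form of monotonicity only reaches the exponent $1/(p-1)$, and you correctly identify the weighted quadratic estimate $\int_\Omega(|u_z|+|u_w|)^{p-2}|h|^2\le CR$ as the right tool, but you do not carry out the conversion of this degenerate-weight bound into an unweighted one --- and that conversion is the entire content of the $p>2$ case. The paper fills it by citing the proof of Chen--Zhang's Theorem 4.7, which yields an open set $U$ with $S\subset\subset U\subset\subset\Omega$ and a small exponent $\alpha=\alpha(p,S,U)>0$ such that $\|h\|_{L^\alpha(U)}\le C\,H_p(z,w)^{1/2}$, i.e.\ an unweighted coercivity bound that still carries the square-root power. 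The mechanism is more elementary than the propagation-of-smallness scheme you sketch: one splits by H\"older against the weight,
\begin{align*}
\int_U|h|^\alpha\le\Big(\int_U(|u_z|+|u_w|)^{p-2}|h|^2\Big)^{\alpha/2}\Big(\int_U(|u_z|+|u_w|)^{-\frac{\alpha(p-2)}{2-\alpha}}\Big)^{1-\alpha/2},
\end{align*}
and observes that small negative powers of the normalized minimizers are uniformly integrable on $U$ (using $u_z(z)=1$, $u_w(w)=1$, the uniform $L^p$ bounds, and standard estimates for holomorphic functions); this is the quantitative substitute for your heuristic that the weight can only degenerate on a small set. Once this lemma is in hand, your scheme closes quadratically rather than with power $p$: all the interior estimates (Cauchy, sub-mean value) only ever need the $L^\alpha(U)$ norm of $h$, since $z,w,z'\in S\subset\subset U$, so $R\le C|z-w|\,\|h\|_{L^\alpha(U)}$ and $\|h\|_{L^\alpha(U)}^2\le CR$ give $\|h\|_{L^\alpha(U)}\le C|z-w|$, and the pointwise conclusion follows. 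Without this ingredient your proposal proves the theorem only for $1<p\le2$.
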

Let us introduce an important function as follows
\begin{align*}
H_p(z,w):=K_p(z)+K_p(w)-\mathrm{Re}\{K_p(z,w)+K_p(w,z)\}.
\end{align*}

\subsection{The case of $1\textless p\leq2$}
In this section, we assume $1\textless p\leq2$ and prove Theorem \ref{thm:holder con}.
\begin{proof}
It follows from the proof of \cite[Lemma 4.5]{CZ22} that 
\begin{align*}
\int_{\Omega}|m_p(\cdot,z)-m_p(\cdot,w)|^p\leq \frac{C_p}{K_p(z)K_p(w)}[K_p(z)+K_p(w)]^{1-\frac{p}{2}}H_p(z,w)^{\frac{p}{2}}.
\end{align*}
This leads to $||m_p(\cdot,z)-m_p(\cdot,w)||_p\leq C(p,S)H_p(z,w)^{\frac{1}{2}}$. Next, we are going to establish an  estimate for $H_p(z,w)$.
\begin{align*}
|\frac{H_p(z,w)}{z-w}|
&=\frac{\mathrm{Re}\{K_p(z)[m_p(w,w)-m_p(w,z)]+K_p(w)[m_p(z,w)-m_p(z,z)]\}}{|z-w|}\\
&\leq K_p(z)\frac{|[m_p(w,w)-m_p(w,z)]-[m_p(z,w)-m_p(z,z)]|}{|z-w|} \\
&+|m_p(z,z)-m_p(z,w)|\frac{|K_p(z)-K_p(w)|}{|z-w|}.
\end{align*}
Since $K_p(\cdot)$ is locally Lipschitz by \cite[Proposition 2.11]{CZ22}, we know that $\frac{|K_p(z)-K_p(w)|}{|z-w|}\leq C(S)$. It follows from the sub mean-value property of plurisubharmonic function that $|m_p(z,z)-m_p(z,w)|\leq C||m_p(\cdot,z)-m_p(\cdot,w)||_p$, for some $C=C(S)$. In view of Cauchy integral formula, we know that $m_p(\cdot,z)-m_p(\cdot,w)$'s derivative is controlled by its $L^1$ norm, therefore we get 
\begin{align*}
\frac{|[m_p(w,w)-m_p(w,z)]-[m_p(z,w)-m_p(z,z)]|}{|z-w|}&\leq C||m_p(\cdot,z)-m_p(\cdot,w)||_1\\
&\leq ||m_p(\cdot,z)-m_p(\cdot,w)||_p
\end{align*}
All the facts above imply that, for some $C=C(S)$
\begin{align*}
H_p(z,w)\leq C||m_p(\cdot,z)-m_p(\cdot,w)||_p\cdot|z-w|.
\end{align*}
Combine this result with the fact that
\begin{align*}
||m_p(\cdot,z)-m_p(\cdot,w)||_p\leq C(p,S)H_p(z,w)^{\frac{1}{2}},
\end{align*}
for any number $\delta$ less than $\frac{1}{2}+\frac{1}{4}+\frac{1}{8}+...=1$, we get that
\begin{center}
$H_p(z,w)=o(|z-w|^{1+\delta})$,
\end{center}
\begin{center}
$||m_p(\cdot,z)-m_p(\cdot,w)||_p=o(|z-w|^{\delta})$.
\end{center}
The desired result follows from $|m_p(z',z)-m_p(z',w)|\leq C||m_p(\cdot,z)-m_p(\cdot,w)||_p\leq C|z-w|^{\delta}$.
\end{proof}
\subsection{The case of $p\textgreater2$}
In this section, we assume $p\textgreater2$ and prove Theorem \ref{thm:holder con}.
\begin{proof}
It follows from the proof of \cite[Theorem 4.7]{CZ22} that there exists an open set $U$ with $S\subseteq U\subseteq\subseteq\Omega$, and a constant $\alpha=\alpha(p,S,U)$, $C=C(p,S,U)$ such that
\begin{align*}
\int_U|m_p(\cdot,z)-m_p(\cdot,w)|^{\alpha}\leq C H_p(z,w)^{\frac{\alpha}{2}}.
\end{align*}
This leads to $||m_p(\cdot,z)-m_p(\cdot,w)||_{L^{\alpha}(U)}\leq CH_p(z,w)^{\frac{1}{2}}$. The rest part of proof is similar with the case $1\textless p\leq2$. We get that $\forall \delta\textless1$, there exists $C=C(\delta,S,U)$ such that
\begin{center}
$||m_p(\cdot,z)-m_p(\cdot,w)||_{L^{\alpha}(U)}\leq |z-w|^{\delta}$,
\end{center}
\begin{center}
$H_p(z,w)\leq C||m_p(\cdot,z)-m_p(\cdot,w)||_{L^{\alpha}(U)}|z-w|$.
\end{center}
The desired result follows from $|m_p(z',z)-m_p(z',w)|\leq C||m_p(\cdot,z)-m_p(\cdot,w)||_{L^{\alpha}(U)}\leq C|z-w|^{\delta}$.
\end{proof}

\section{Asymptotic Behavior of Maximizers of $K_p(z)$ as $p \rightarrow 1^{-}$} \label{sect: asymptotic}
We know that when $p\geq1$, the maximizer $f$ of $K_p(z)$ is unique under the condition $f(z)=1$. Actually, it is precisely the minimizer $m_p(\cdot,z)$ of $m_p(z)$. However, the uniqueness of the maximizer is not known for  $p\textless1$. Nevertheless, we can prove the following: 
\begin{thm}
Let $p\textless1$, $A^p(\Omega)$ is a metric space. We define the metric $d(f,g):=\int_{\Omega}|f-g|^p$, and the function $d_p(z):=\sup\{ d(f_p,g_p)\}$, where $\sup$ is taken over all pairs of maximizers $f_p,g_p$ of $K_p(z)$ satisfying $f_p(z)=g_p(z)=1$. Then, it holds that $$\forall z\in{\Omega}, \lim_{p \to 1}d_p(z)=0.$$
\end{thm}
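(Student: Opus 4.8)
The plan is to argue by contradiction via a normal family (Montel) compactness argument, reducing everything to the uniqueness of the minimizer at $p=1$. Suppose the conclusion fails at some $z\in\Omega$. Then there exist $\delta>0$, a sequence $p_k\uparrow 1$, and for each $k$ a pair of maximizers $f_k,g_k$ of $K_{p_k}(z)$ with $f_k(z)=g_k(z)=1$ and $d(f_k,g_k)=\int_\Omega|f_k-g_k|^{p_k}\ge\delta$. A normalized maximizer satisfies $\|f_k\|_{p_k}^{p_k}=1/K_{p_k}(z)=m_{p_k}(z)^{p_k}$. Testing $m_{p_k}(z)$ against the constant function $1$ gives $m_{p_k}(z)\le \mathrm{vol}(\Omega)^{1/p_k}$, so the $L^{p_k}$ norms are uniformly bounded; the sub-mean value property of the plurisubharmonic functions $|f_k|^{p_k},|g_k|^{p_k}$ then yields uniform bounds on compact subsets of $\Omega$, and applied at $z$ it also gives $m_{p_k}(z)^{p_k}\ge \mathrm{vol}(B_r)>0$, so the values $m_{p_k}(z)$ remain in a fixed compact subinterval of $(0,\infty)$. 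Hence, after passing to a subsequence, $f_k\to f$ and $g_k\to g$ locally uniformly, with $f,g$ holomorphic and $f(z)=g(z)=1$.

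The next step is to identify the limits as the unique $p=1$ minimizer $m_1(\cdot,z)$. First I would prove the one-sided continuity $\limsup_k m_{p_k}(z)\le m_1(z)$: given $\varepsilon>0$ choose an admissible $h$ with $h(z)=1$ and $\int_\Omega|h|\le m_1(z)+\varepsilon$; since $\Omega$ is bounded, $\|h\|_{p_k}\to\|h\|_1$ as $p_k\uparrow 1$ by dominated convergence, and $m_{p_k}(z)\le\|h\|_{p_k}$. Because the $m_{p_k}(z)$ stay in a compact subinterval of $(0,\infty)$ and $p_k\to1$, this upgrades to $\limsup_k\int_\Omega|f_k|^{p_k}=\limsup_k m_{p_k}(z)^{p_k}\le m_1(z)$. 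On the other hand $|f_k|^{p_k}\to|f|$ pointwise a.e., so Fatou gives $\int_\Omega|f|\le\liminf_k\int_\Omega|f_k|^{p_k}\le m_1(z)$; since $f(z)=1$ forces $\int_\Omega|f|\ge m_1(z)$, we conclude $\int_\Omega|f|=m_1(z)$, i.e. $f$ is a $p=1$ minimizer. By the uniqueness statement quoted above, $f=m_1(\cdot,z)$, and likewise $g=m_1(\cdot,z)$; moreover the sandwich forces $\int_\Omega|f_k|^{p_k}\to m_1(z)$ and $\int_\Omega|g_k|^{p_k}\to m_1(z)$.

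To finish I would upgrade the pointwise convergence to convergence of the integrals defining $d$. Since $|f_k|^{p_k}\to|m_1(\cdot,z)|$ pointwise with convergent integrals, Scheff\'e's lemma gives $|f_k|^{p_k}\to|m_1(\cdot,z)|$ in $L^1(\Omega)$, and similarly for $g_k$; hence $|f_k|^{p_k}+|g_k|^{p_k}$ converges in $L^1$ and is therefore uniformly integrable. Using the subadditivity valid for $p_k<1$, namely $|f_k-g_k|^{p_k}\le|f_k|^{p_k}+|g_k|^{p_k}$, the family $\{|f_k-g_k|^{p_k}\}$ is uniformly integrable as well, while $|f_k-g_k|^{p_k}\to0$ pointwise because $f_k,g_k\to m_1(\cdot,z)$. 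Vitali's convergence theorem on the finite measure space $\Omega$ then yields $d(f_k,g_k)=\int_\Omega|f_k-g_k|^{p_k}\to0$, contradicting $d(f_k,g_k)\ge\delta$.

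The main obstacle is the passage from local uniform convergence to convergence of the global integrals: a priori the mass of $f_k$ could escape to $\partial\Omega$, so that the limit $f$ loses mass and fails to be a genuine $p=1$ minimizer, and so that $\int_\Omega|f_k-g_k|^{p_k}$ need not vanish even though $f_k-g_k\to0$ locally. This is resolved precisely by the sandwich $\int_\Omega|f|\le\liminf_k\int_\Omega|f_k|^{p_k}\le\limsup_k\int_\Omega|f_k|^{p_k}\le m_1(z)\le\int_\Omega|f|$, which pins the total masses to $m_1(z)$ and thereby supplies, through Scheff\'e and Vitali, the uniform integrability needed to control $d$. The delicate point to verify carefully is the lower continuity of $p\mapsto m_p(z)$ at $p=1$, which underlies the whole sandwich.
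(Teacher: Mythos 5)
Your argument is correct, and its skeleton coincides with the paper's: argue by contradiction, use a normal-family (Montel) argument to extract locally uniform limits of the two sequences of normalized maximizers, identify both limits with the unique $p=1$ minimizer $m_1(\cdot,z)$, and then upgrade to convergence in the metric $d$ to contradict $d(f_k,g_k)\ge\delta$. You deviate in the execution of two steps, and both deviations are sound. First, where the paper identifies the limit by fixing an exponent $s<1$, applying Fatou and H\"older at exponent $s$, and invoking the continuity $K_{p_n}(z)\to K_1(z)$ from \cite[Proposition 6.1(1)]{CZ22} before letting $s\to1$, you instead prove the one-sided bound $\limsup_k m_{p_k}(z)\le m_1(z)$ by hand (testing a near-minimizer $h$ and using dominated convergence, with the uniform two-sided bounds on $m_{p_k}(z)$ handling the varying exponent) and apply Fatou directly to $|f_k|^{p_k}\to|f|$; this makes the proof self-contained and shows that only upper semicontinuity of $p\mapsto m_p(z)$ at $1^-$ is really needed, Fatou supplying the other inequality for free. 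Second, where the paper proves $\int_\Omega|f_{p_n}-f|^{p_n}\to0$ by an explicit $\varepsilon$-splitting of $\Omega$ into $U\subset\subset\Omega$ and $\Omega\setminus U$ with a H\"older estimate on the tail, you derive the same mass-concentration conclusion from the sandwich $\int_\Omega|f_k|^{p_k}\to\int_\Omega|f|$ via Scheff\'e's lemma, and then conclude with uniform integrability and Vitali's theorem applied directly to $|f_k-g_k|^{p_k}$ rather than via the paper's triangle inequality through $m_1(\cdot,z)$. The content is the same in both places --- no mass escapes to $\partial\Omega$ --- but your packaging through standard measure-theoretic lemmas is cleaner, at the cost of invoking Scheff\'e and Vitali where the paper uses only elementary estimates.
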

\begin{proof}
We have $K_p(z)^{-1}=\frac{1}{\int_{\Omega}|f_p|^p}\leq \int_{\Omega}1=|\Omega|$. Therefore, we know that $\forall p_0\textless1,\{f_p\}_{p_0\textless p\textless 1}$ is a normal family. Thus, there exists a subsequence $\{f_{p_n}\}$ that converges uniformly on compact subsets of $\Omega$ to some $f$. For any $p_0 \textless s\textless1$, by Fatou's lemma, H\"older's inequality and \cite[Proposition 6.1(1)]{CZ22}, we get that
\begin{align*}
\int|f|^s&\leq \liminf_{n \to\infty}\int|f_{p_n}|^s \\
&\leq \lim_{n \to \infty}(\int|f_{p_n}|^{p_n})^{\frac{s}{p_n}}|\Omega|^{1-\frac{s}{p_n}}\\ 
&=\lim_{n \to \infty} K_{p_n}(z)^{-\frac{s}{p_n}}|\Omega|^{1-\frac{s}{p_n}}\\
&=K_1(z)^{-s}|\Omega|^{1-s}.
\end{align*}
It follows that $\int|f|=\lim\limits_{s \to 1}\int|f|^{s}\leq \lim\limits_{s \to1} K_1(z)^{-s}|\Omega|^{1-s}=K_1(z)^{-1}.$ Howerver, $f(z)=1$ implies that $f$ is a maximizer of $K_1(z)$ at $z$.

Next, we prove $\lim\limits_{n \to \infty} \int_{\Omega}|f_{p_n}-f|^{p_n}=0.$

For any $\varepsilon \textgreater 0$, there exists $U\subset\subset \Omega$, such that $\int_{U}|f|\textgreater K_1(z)^{-1}-\varepsilon.$ This means $\int_{\Omega-U}|f|\textless \varepsilon$. Moreover, for sufficiently large $n$, since $f_{p_n}$ uniformly converge to $f$ on any compact subset of $\Omega$, we know that $\int_{U}|f_{p_n}-f|^{p_n}\textless \varepsilon.$ On the other hand, by $|f_{p_n}-f|^{p_n}\leq|f_{p_n}|^{p_n}+|f|^{p_n}$, we can see that 
\begin{align*}
\int_{\Omega-U}|f_{p_n}-f|^{p_n} &\leq \int_{\Omega-U}(|f_{p_n}|^{p_n}+|f|^{p_n})\\
&\leq K_{p_n}(z)^{-1}-\int_{U}|f_{p_n}|^{p_n}+(\int_{\Omega-U}|f|)^{p_n}|\Omega|^{1-p_n}\\
&\leq K_{p_n}(z)^{-1}-(\int_{U}|f|^{p_n}-\varepsilon)+\varepsilon^{p_n}|\Omega|^{1-p_n}.
\end{align*}
Notice that $\lim\limits_{n \to \infty}\int_{U}|f|^{p_n}=\int_{U}|f|\textgreater K_1(z)^{-1}-\varepsilon$ and $\lim\limits_{n \to \infty} K_{p_n}(z)=K_1(z)$ (\cite[Proposition 6.1(1)]{CZ22}). Therefore, we can conclude that ${\limsup_{n \to \infty}}\int_{\Omega-U}|f_{p_n}-f|^{p_n}\leq 3\varepsilon.$ Since $\varepsilon$ is arbitrary, it follows that $\lim_{n \to \infty}\int_{\Omega}|f_{p_n}-f|^{p_n}=0.$

Below, we prove the theorem by contradiction. If there exists $\delta\textgreater0$ such that there exists a sequence $\{p_n\} $ converges to $1$, and $\int_{\Omega}|f_{p_n}-g_{p_n}|^{p_n}=d(f_{p_n},g_{p_n})\textgreater \delta$. By taking subsequences twice, we may assume that $f_{p_n}$ and $g_{p_n}$ both converge to the maximizer $m_1(\cdot,z)$ of $K_1(z)$, as described above. However, this leads to $\int_{\Omega}|f_{p_n}-g_{p_n}|^{p_n}\leq \int_{\Omega}|f_{p_n}-m_1(\cdot,z)|^{p_n}+\int_{\Omega}|g_{p_n}-m_1(\cdot,z)|^{p_n}\rightarrow0$, which is a contradiction.
\end{proof}

\section{Characterization of $L^p$-integrability of a class of holomorphic functions on $\mathbb{B}^1$ }\label{sect: char}
Let  $\Omega=\mathbb{B}^1=\{z\in\mathbb{C}:|z|\textless1\}$.  In this section, we give a characterization of holomorphic functions $f\in \mathcal O(\mathbb B^1)$ to be $L^p$-integrable.

\begin{definition}
A sequence $\{\lambda_k\}_{k\in\mathbb{N}^*}$ is called lacunary with constant $A$ if there exists $A\textgreater1$, such that $\lambda_{k+1}\geq A\lambda_k$.
\end{definition}
The main theorem of this section is following
\begin{thm}\label{thm: char}
Let $p\textgreater0$, there exists $C=C(p,A)$ such that, if $f\in \mathcal{O}(\mathbb{B}^1)$, $f(z)=\sum_{k=1}^{\infty}a_{\lambda_k}z^{\lambda_k}$ for some lacunary sequence $\{\lambda_k\}$ with constant $A\textgreater1$, 
\begin{align*}
C(p,A)^{-1}\int_0^1(\sum_{k=1}^{\infty}|a_{\lambda_k}r^{2\lambda_k}|)^{\frac{p}{2}}dr\leq \int_{\mathbb{B}^1}|f|^p\leq C(p,A)\int_0^1(\sum_{k=1}^{\infty}|a_{\lambda_k}r^{2\lambda_k}|)^{\frac{p}{2}}dr.
\end{align*}

\end{thm}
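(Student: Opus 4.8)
The plan is to pass to polar coordinates and reduce the two-sided estimate to a circle-by-circle (fixed-radius) equivalence, which is precisely a Khinchin--Zygmund type inequality for lacunary Fourier series, and then to integrate in the radial variable. Writing $z=re^{i\theta}$ and using Tonelli's theorem (all integrands are nonnegative),
\begin{align*}
\int_{\mathbb{B}^1}|f|^p=\int_0^1\Big(\int_0^{2\pi}\big|f(re^{i\theta})\big|^p\,d\theta\Big)\,r\,dr .
\end{align*}
For each fixed $r\in(0,1)$ the map $\theta\mapsto f(re^{i\theta})=\sum_k a_{\lambda_k}r^{\lambda_k}e^{i\lambda_k\theta}$ is a lacunary trigonometric series with frequencies $\{\lambda_k\}$ (lacunary with constant $A$) and coefficients $c_k:=a_{\lambda_k}r^{\lambda_k}$, and $\sum_k|c_k|^2=g(r)$, where I write $g(r):=\sum_k|a_{\lambda_k}|^2r^{2\lambda_k}$ for the quantity appearing on the right-hand side. (The inner sum must be read as $\sum_k|a_{\lambda_k}r^{\lambda_k}|^2=\sum_k|a_{\lambda_k}|^2 r^{2\lambda_k}$: otherwise the two sides of the asserted inequality scale differently under $f\mapsto tf$.)

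The core step is Zygmund's theorem on lacunary series: for every $0<p<\infty$ there exist $0<c_{p,A}\le C_{p,A}<\infty$, depending only on $p$ and $A$, such that for all square-summable $\{c_k\}$,
\begin{align*}
c_{p,A}\Big(\sum_k|c_k|^2\Big)^{p/2}\le \frac{1}{2\pi}\int_0^{2\pi}\Big|\sum_k c_k e^{i\lambda_k\theta}\Big|^p\,d\theta\le C_{p,A}\Big(\sum_k|c_k|^2\Big)^{p/2}.
\end{align*}
Applying this with $c_k=a_{\lambda_k}r^{\lambda_k}$ gives, for each fixed $r$, that $\int_0^{2\pi}|f(re^{i\theta})|^p\,d\theta$ is comparable to $g(r)^{p/2}$ with constants depending only on $(p,A)$. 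For the two easy directions one uses $L^2$-orthogonality (Parseval gives $\frac{1}{2\pi}\int_0^{2\pi}|F|^2\,d\theta=\sum_k|c_k|^2$) together with the monotonicity of $L^p$-norms on the probability space $(\mathbb T,\frac{d\theta}{2\pi})$; this yields the lower bound for $p\ge2$ and the upper bound for $p\le2$. The genuinely nontrivial directions --- the upper bound for $p\ge2$ and the lower bound for $p\le2$ --- are where lacunarity is essential: for an even integer $p=2m$ one expands $|F|^{2m}=F^m\overline{F}^m$ and bounds the number of solutions of $\lambda_{k_1}+\cdots+\lambda_{k_m}=\lambda_{j_1}+\cdots+\lambda_{j_m}$ using the gap condition $\lambda_{k+1}\ge A\lambda_k$, and general $p$ is reached by the Paley--Zygmund comparison of a lacunary series with a sum of independent symmetric variables (Khinchin's inequality), all with constants controlled by $A$ alone.

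It then remains to integrate the circle estimate in $r$ and to trade the weight $r\,dr$ for $dr$. Multiplying by $r$ and integrating over $(0,1)$ gives that $\int_{\mathbb{B}^1}|f|^p$ is comparable, with constants depending only on $(p,A)$, to $\int_0^1 g(r)^{p/2}\,r\,dr$. Since each $r^{2\lambda_k}$ is nondecreasing on $[0,1]$ and $t\mapsto t^{p/2}$ is increasing, the function $r\mapsto g(r)^{p/2}$ is nondecreasing; hence by Chebyshev's integral inequality for the two similarly ordered factors $g^{p/2}$ and $r$ on the probability space $([0,1],\,dr)$,
\begin{align*}
\tfrac12\int_0^1 g(r)^{p/2}\,dr=\Big(\int_0^1 g^{p/2}\,dr\Big)\Big(\int_0^1 r\,dr\Big)\le \int_0^1 g(r)^{p/2}\,r\,dr\le \int_0^1 g(r)^{p/2}\,dr,
\end{align*}
the last step because $r\le1$. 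Thus $\int_0^1 g^{p/2}\,r\,dr$ and $\int_0^1 g^{p/2}\,dr$ are comparable with absolute constants, and combining this with the previous comparison yields the claim with $C=C(p,A)$.

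The hard part is the circle estimate, i.e.\ the two-sided lacunary $L^p$--$L^2$ equivalence with constants depending only on $(p,A)$: the elementary norm-nesting argument supplies only one inequality in each range of $p$, and the reverse inequalities require the full Zygmund/Paley--Zygmund machinery (even-moment counting via the gap condition, followed by the Khinchin-type comparison). Once that input is available, the radial integration and the monotonicity (Chebyshev) trick are routine, and the passage from finite lacunary polynomials to the general series follows by monotone convergence of $g$ together with locally uniform convergence of $f$ on each circle $|z|=r<1$.
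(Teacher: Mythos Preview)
Your proof is correct and follows essentially the same route as the paper: pass to polar coordinates and invoke the Zygmund--Khinchin equivalence of $L^p$ and $L^2$ norms for lacunary trigonometric series on each circle, then integrate radially. The only substantive differences are that you cite the full-range ($0<p<\infty$) Zygmund theorem directly, whereas the paper quotes a $p\ge1$ version from Grafakos and bootstraps to $p<1$ by a one-line H\"older interpolation, and that you carefully dispose of the Jacobian factor $r\,dr$ via Chebyshev's integral inequality for similarly ordered functions, a point the paper's write-up glosses over.
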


We need the following lemma(\cite[Theorem 3.6.4]{Gra08})
\begin{lem}\label{lem:fourier}
Let $T=[0,1]$, $1\leq\lambda_1\textless\lambda_2\textless...$ be a lacunary sequence with constant $A\textgreater1$. Set $\Gamma=\{\lambda_k:k\in\mathbb{N}^*\}$. Then for all $1\geq p\textless\infty$, there exists a constant $C_p(A)$ such that for all $f\in L^1(T)$, with $f(k)=0$ when $k\in \mathbb{N}^*- \Gamma$, we have
$$||f||_{L^p(T)}\leq C_p(A)||f||_{L^1(T)}.$$
Moreover, the converse inequality is also valid, hence all $L^p$ norms of lacunary Fourier sequence are equivalent for $1\leq p\textless\infty$.
\end{lem}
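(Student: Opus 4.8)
The content of the lemma is the classical fact that lacunary Fourier series have mutually equivalent $L^p$ norms for $1\le p<\infty$. Writing $f\sim\sum_k a_k e^{2\pi i\lambda_k\theta}$, I plan to prove the non-trivial estimate $\|f\|_{L^p(T)}\le C_p(A)\|f\|_{L^1(T)}$; the reverse bound $\|f\|_{L^1(T)}\le\|f\|_{L^p(T)}$ is immediate from H\"older's inequality since $|T|=1$ (indeed $\|f\|_{L^r(T)}\le\|f\|_{L^s(T)}$ whenever $r\le s$). It suffices to prove the estimate for lacunary trigonometric polynomials with a constant independent of the number of terms and then pass to the limit, so I may assume all sums are finite; in particular Parseval gives $\|f\|_{L^2(T)}=(\sum_k|a_k|^2)^{1/2}$. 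The argument rests on two inequalities: an \emph{upper} moment bound $\|f\|_{L^{2m}(T)}\le C_{m,A}\|f\|_{L^2(T)}$ for every positive integer $m$, and a \emph{lower} bound $\|f\|_{L^2(T)}\le C_A\|f\|_{L^1(T)}$. Granting these, the lemma follows at once: for $1\le p\le2$ one writes $\|f\|_p\le\|f\|_2\le C_A\|f\|_1$, while for $p>2$ one chooses an integer $m$ with $2m\ge p$ and chains $\|f\|_p\le\|f\|_{2m}\le C_{m,A}\|f\|_2\le C_{m,A}C_A\|f\|_1$.

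For the upper moment bound I would expand $\|f\|_{2m}^{2m}=\|f^m\|_{L^2(T)}^2$. The polynomial $f^m$ has Fourier support in the set of sums $\lambda_{k_1}+\cdots+\lambda_{k_m}$, and the coefficient $c_\mu$ attached to a frequency $\mu$ is $\sum a_{k_1}\cdots a_{k_m}$, the sum running over the $m$-tuples whose indices sum to $\mu$. If $R(\mu)$ denotes the number of such tuples, Cauchy--Schwarz gives $|c_\mu|^2\le R(\mu)\sum|a_{k_1}|^2\cdots|a_{k_m}|^2$, and summing over $\mu$ yields $\|f^m\|_2^2\le(\sup_\mu R(\mu))\,(\sum_k|a_k|^2)^m$. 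Thus the moment bound reduces entirely to the combinatorial claim that $R(\mu)\le B_{m,A}$ uniformly in $\mu$.

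The representation count is the heart of the matter, and the only delicate point is that the gap ratio $A$ may be close to $1$. I would prove $R(\mu)\le B_{m,A}$ by induction on $m$. Let $j^{*}$ be the largest index occurring in a representation $\mu=\sum_j m_j\lambda_j$ with $\sum_j m_j=m$. Lacunarity bounds the tail by $\sum_{j<j^{*}}m_j\lambda_j\le m\lambda_{j^{*}-1}\le m\lambda_{j^{*}}/A$, which forces $\lambda_{j^{*}}$ into the range $[\,\mu A/(m(A+1)),\,\mu\,]$; since consecutive elements of $\Gamma$ differ by a factor at least $A$, the number of admissible indices $j^{*}$ is at most $\log_A(2m)+1$, a constant for fixed $m$ and $A$. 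For each such $j^{*}$ the multiplicity $m_{j^{*}}$ is confined to an interval of length $m/A$, giving at most $m/A+1$ choices, and removing the top group reduces the problem to a representation of $\mu-m_{j^{*}}\lambda_{j^{*}}$ by fewer than $m$ summands drawn from smaller indices. The induction then produces a finite bound $B_{m,A}$; note that this bound degenerates as $A\to1^{+}$, which is exactly why the constant is allowed to depend on $A$.

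Finally, the lower bound is obtained by a self-improving interpolation. Log-convexity of $L^p$ norms gives $\|f\|_2\le\|f\|_1^{1/3}\|f\|_4^{2/3}$, and inserting the case $m=2$ of the moment bound, $\|f\|_4\le C_{2,A}\|f\|_2$, yields $\|f\|_2\le\|f\|_1^{1/3}(C_{2,A}\|f\|_2)^{2/3}$. Dividing through by the finite quantity $\|f\|_2^{2/3}$ (the case $f\equiv0$ being trivial) and cubing gives $\|f\|_2\le C_{2,A}^{2}\|f\|_1$, the desired lower bound. Combining it with the moment bounds as in the first paragraph establishes $\|f\|_p\le C_p(A)\|f\|_1$ for all $1\le p<\infty$, and hence the equivalence of all these norms on the lacunary class.
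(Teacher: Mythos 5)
Your proof is correct, but note that the paper itself offers no argument for this lemma: it is quoted verbatim (with two typos --- ``$1\geq p<\infty$'' should read $1\leq p<\infty$, and ``$f(k)=0$'' means $\hat f(k)=0$, as you correctly interpreted) from Grafakos \cite[Theorem 3.6.4]{Gra08}, so there is no in-paper proof to compare against. What you have written is essentially the classical Zygmund-style argument behind that citation: even-moment bounds $\|f\|_{L^{2m}}\leq C_{m,A}\|f\|_{L^2}$ via a uniform bound on the number of representations $\mu=\lambda_{k_1}+\cdots+\lambda_{k_m}$, followed by the self-improving interpolation $\|f\|_{2}\leq\|f\|_{1}^{1/3}\|f\|_{4}^{2/3}$ to bootstrap the lower bound $\|f\|_{2}\leq C_{2,A}^{2}\|f\|_{1}$. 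Your combinatorial count is sound, including the delicate regime $A\to1^{+}$: the localization $\lambda_{j^*}\in[\mu A/(m(A+1)),\mu]$ gives at most $\log_A(2m)+1$ choices of top index, the multiplicity $m_{j^*}$ is pinned to an interval of length $m/A$, and induction on $m$ closes the bound. Two details are glossed over but are harmless: first, your $R(\mu)$ in the Cauchy--Schwarz step counts ordered $m$-tuples while the inductive count works with multiplicities $m_j$ (i.e.\ multisets); these differ by at most a factor $m!$, absorbed into $B_{m,A}$. Second, the passage from lacunary polynomials to general $f\in L^1(T)$ deserves one line: the Fej\'er means $\sigma_N f$ have spectrum contained in $\Gamma$, satisfy $\|\sigma_N f\|_{1}\leq\|f\|_{1}$, and converge to $f$ in $L^1$, so the uniform polynomial estimate plus Fatou along an a.e.\ convergent subsequence yields $\|f\|_{p}\leq C_p(A)\|f\|_{1}$. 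With those two remarks your write-up is a complete, self-contained proof of the cited result.
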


\begin{proof}
We write $z\in\mathbb{B}^1$ as $z=re^{2\pi it},0\leq r<1,t\in T$. For a given $0\leq r<1,f(z)=f(re^{2\pi it})=\sum_{k=1}^{\infty}a_{\lambda_k}r^{\lambda_k}e^{2\pi \lambda_kit}$. Since $f$ is continuous with respect to $t\in T$,  hence it is $L^p$ integrable over $T$, $\forall p>0$. From Lemma \ref{lem:fourier} above, we know that the $L_p(T)$ norms of $f|_{\{|z|=r\}}$ are equivalent for all $p\geq1$. However, for any $q<1$, by H\"older's inequality we obtain

$$(\int_T|f|^q)^{\frac{1}{2}}(\int_T|f|^{\alpha})^{\frac{1}{2}}\geq \int|f|$$ where $\alpha=2-q>1$. Therefore, all the $L^p(T)$ norms of $f|_{\{|z|=r\}}$ are equivalent. This allows us to calculate the $L^p$ norm of $f$ using its $L^2$ norm as follows: $$\int_{B(0,1)}|f|^p=\int_0^1||f|_{\{|z|=r\}}||_p^pdr\approx\int_0^1||f|_{\{|z|=r\}}||_2^pdr$$ $$=\int_0^1(\sum_{k=1}^{\infty}|a_{\lambda_k}|^2r^{2\lambda_k})^{\frac{p}{2}}dr.$$ This completes the proof.

\end{proof}

Now we fix a lacunary sequence $\{\lambda_k\}$, saying $\{2^k\}$, and consider the subspace of $A^p(\mathbb{B}^1)$: $A^p_c(\mathbb{B}^1):=\{f\in A^p(\mathbb{B}^1):f(z)=\sum_{k=1}^{\infty}a_kz^{2^k}\}$. We can prove following 

\begin{thm}
$A^p_c(\mathbb{B}^1)$ is a closed subspace of $A^p(\mathbb{B}^1)$.
\end{thm}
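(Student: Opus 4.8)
The plan is to show that $A^p_c(\mathbb{B}^1)$ is closed by taking a sequence $\{f_j\} \subseteq A^p_c(\mathbb{B}^1)$ with $f_j \to f$ in $A^p(\mathbb{B}^1)$, and proving that the limit $f$ again has a power series supported on the lacunary set $\{2^k\}$. First I would recall that $L^p$-convergence on the bounded domain $\mathbb{B}^1$ forces locally uniform convergence of holomorphic functions on compact subsets: by the sub-mean-value inequality for the plurisubharmonic function $|f_j - f_\ell|^p$, the sup-norm of $f_j - f_\ell$ on any compact $S \subseteq\subseteq \mathbb{B}^1$ is controlled by $\|f_j - f_\ell\|_{L^p(\mathbb{B}^1)}$, so $\{f_j\}$ is locally uniformly Cauchy and hence converges locally uniformly to a holomorphic limit, which must coincide with the $A^p$-limit $f$.

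Once locally uniform convergence is in hand, the Taylor coefficients converge: writing $f_j(z) = \sum_{k=1}^\infty a^{(j)}_k z^{2^k}$ and $f(z) = \sum_{n=0}^\infty c_n z^n$, the Cauchy integral formula $c_n = \frac{1}{2\pi i}\int_{|z|=r} \frac{f(z)}{z^{n+1}}\,dz$ together with the corresponding formula for $f_j$ shows that each Taylor coefficient of $f_j$ converges to the matching coefficient of $f$ as $j \to \infty$. Since every $f_j$ has vanishing coefficients off the lacunary set $\{2^k\}$, the limit coefficient $c_n$ is likewise zero whenever $n \notin \{2^k\}$. Therefore $f(z) = \sum_{k=1}^\infty c_{2^k} z^{2^k}$ has its support on the lacunary sequence, which is exactly the membership condition for $A^p_c(\mathbb{B}^1)$, so $f \in A^p_c(\mathbb{B}^1)$ and the subspace is closed.

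For $p \geq 1$ this is routine, but the subtlety here is that $p$ may be less than $1$, in which case $A^p(\mathbb{B}^1)$ is only a metric space under $d(f,g) = \int_{\mathbb{B}^1} |f-g|^p$ rather than a normed space. The main point I would emphasize is that the sub-mean-value argument still applies, because $|f_j - f|^p$ is plurisubharmonic for \emph{every} $p > 0$ (the function $|h|^p$ is plurisubharmonic for any holomorphic $h$ and any $p>0$), so the estimate $\sup_S |f_j - f_\ell|^p \leq C(S)\, d(f_j, f_\ell)$ remains valid and the locally-uniform-convergence step goes through unchanged. I expect this to be the only delicate point; the coefficient-passage step via Cauchy's formula is then entirely insensitive to the value of $p$, since it only uses locally uniform convergence on the circles $|z| = r$.

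In summary, the two key steps, in order, are: (1) upgrade $A^p$-convergence to locally uniform convergence using the sub-mean-value property of $|f_j - f_\ell|^p$, valid for all $p > 0$; and (2) pass to the limit in the Taylor coefficients via Cauchy's integral formula to conclude that the limit is supported on $\{2^k\}$. The anticipated obstacle is purely the $p<1$ regime, and it is resolved by observing that plurisubharmonicity of $|h|^p$ (hence the sub-mean-value estimate) holds regardless of $p$; no Banach-space structure is needed.
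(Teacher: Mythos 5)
Your proof is correct, and it takes a genuinely different, more elementary route than the paper's. The paper argues through its characterization theorem: it takes a Cauchy sequence $\{f_n\}\subseteq A^p_c(\mathbb{B}^1)$, uses the equivalence $\int_{\mathbb{B}^1}|f|^p\approx\int_0^1\bigl(\sum_k|a_k|^2r^{2^{k+1}}\bigr)^{p/2}dr$ to extract coefficientwise limits $a_{n,k}\to a_k$, builds the candidate limit $f=\sum_k a_k z^{2^k}$, verifies $f\in A^p(\mathbb{B}^1)$ by a Fatou-type argument, and finally proves $f_n\to f$ in $A^p$ via tail estimates uniform in $n$; in effect it establishes completeness of $A^p_c(\mathbb{B}^1)$, which yields closedness since a complete subspace of a metric space is closed. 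You instead prove sequential closedness directly: the sub-mean-value inequality for the subharmonic function $|f_j-f|^p$ --- valid for every $p>0$, which is exactly the observation needed in the regime $p<1$ where no norm is available --- upgrades metric convergence to locally uniform convergence, and the Cauchy integral formula then transports the vanishing of the off-lacunary Taylor coefficients to the limit. Your argument is shorter, never invokes the lacunary Fourier lemma or the characterization theorem, and in fact never uses lacunarity at all: it shows that for an arbitrary exponent set $\Lambda\subseteq\mathbb{N}$ the subspace of $A^p(\mathbb{B}^1)$ consisting of functions whose Taylor coefficients are supported in $\Lambda$ is closed. What the paper's longer route buys is extra quantitative information tied to the lacunary structure: completeness of $A^p_c$ and uniform-in-$n$ convergence in $A^p$ of the partial sums, which the bare closedness statement does not provide. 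One small streamlining of your write-up: since the limit $f$ is itself holomorphic and lies in $A^p(\mathbb{B}^1)$ by hypothesis, you can apply the sub-mean-value estimate directly to $f_j-f$ and skip the intermediate step of passing through a locally uniformly Cauchy sequence and identifying its limit; the identification you sketch is nonetheless fine, via convergence in measure and a.e.\ convergence along a subsequence.
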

\begin{proof} Let $A^p_c(\mathbb{B}^1)$ be any Cauchy sequence in $A^p(\mathbb{B}^1)$ with respect to the distance function of $A^p(\mathbb{B}^1)$, denoted by $\{f_n\}_{n=1}^{\infty}$, where $f_n(z)=\sum_{k=1}^{\infty}a_{n,k}z^{2^k}$. From the above theorem, it is easy to see that for every $k$, the sequence $\{a_{n,k}\}$ converges to a complex number $a_k$. We will now prove that $f(z):=\sum_{k=1}^{\infty}a_kz^{2^k}\in A^p(\mathbb{B}^1)$, which is the limit of the sequence $\{f_n\}$ in $A^p(B(0,1))$. \ From the above theorem, we have 
\begin{align*}
\int|f|^p&\approx\int_0^1(\sum_{k=1}^{\infty}|a_k|^2r^{2^{k+1}})^{\frac{p}{2}}dr\\
&=\lim_{N \to \infty}\int(\sum_{k=1}^N|a_k|^2r^{2^{k+1}})^{\frac{p}{2}}dr\\ 
&=\lim_{N \to \infty}[\lim_{n \to \infty}\int(\sum_{k=1}^N|a_{n,k}|^2r^{2^{k+1}})^{\frac{p}{2}}dr]\\ 
&\leq \lim_{n \to \infty}\int(\sum_{k=1}^{\infty}|a_{n,k}|^2r^{2^{k+1}})^{\frac{p}{2}}dr\\
&=\lim_{n \to \infty}\int|f_n|^p.
\end{align*} 
Therefore, $f\in A^p(\mathbb{B}^1)$.\ Next, we prove that in $A^p(\mathbb{B}^1)$,$\forall n$, 
\begin{align*}
\sum_{k=1}^Na_{n,k}z^{2^{k+1}}\rightrightarrows \sum_{k=1}^{\infty}a_{n,k}z^{2^{k+1}}=f_n(N\rightarrow\infty).
\end{align*}
It is sufficient to prove that for any $\varepsilon>0$, $\int(\sum_{k=N}^{\infty}|a_{n,k}|^2r^{2^{k+1}}dr)^{\frac{p}{2}}\textless \varepsilon,\forall \ n\geq N=N(\varepsilon)$. In fact, since $\{f_n\}$ is a Cauchy sequence, there exists $M_0$, such that when $n\geq M_0$, $$\int_0^1(\sum_{k=1}^{\infty}|a_{n,k}-a_{M_0,k}|^2r^{2^{k+1}})^{\frac{p}{2}}dr\textless \varepsilon$$ Also note that there exists $N_0$, such that $$\int_0^1(\sum_{k=N_0}^{\infty}|a_{M_0,k}|^2r^{2^{k+1}}|)^{\frac{p}{2}}dr\textless \varepsilon$$ Combining these two facts, we can conclude that when $n\geq M_0$, $$\int_0^1(\sum_{k=N_0}^{\infty}|a_{n,k}|^2r^{2^{k+1}})^{\frac{p}{2}}dr\leq C\varepsilon,$$ where $C$ is a positive constant only dependent on $p$, which implies uniform convergence.\ Finally, we prove that in $A^p(\mathbb{B}^1)$, $f_n\rightarrow f$.\ It suffices to prove that for any $\varepsilon>0$, there exists $N$, such that for $n>N$, $$\int_0^1(\sum_{k=N}^{\infty}|a_{n,k}-a_k|^2r^{2^{k+1}})^{\frac{p}{2}}dr\leq \varepsilon$$ 
We notice that
\begin{align*}
&\ \ \ \ \int_0^1(\sum_{k=N}^{\infty}|a_{n,k}-a_k|^2r^{2^{k+1}})^{\frac{p}{2}}dr\\
&\leq 2^{\frac{p}{2}}[\int_0^1(\sum_{k=N}^{\infty}|a_{n,k}|^2r^{2^{k+1}})^{\frac{p}{2}}dr+\int_0^1(\sum_{k=N}^{\infty}|a_k|^2r^{2^{k+1}})^{\frac{p}{2}}dr], \ \mathrm{if} \ p\leq2 \\
&\leq2^{p-1}[\int_0^1(\sum_{k=N}^{\infty}|a_{n,k}|^2r^{2^{k+1}})^{\frac{p}{2}}dr+\int_0^1(\sum_{k=N}^{\infty}|a_k|^2r^{2^{k+1}})^{\frac{p}{2}}dr], \ \mathrm{if} \ p\geq2.
\end{align*}
This together with the uniform convergence yields the desired result.
\end{proof}

\begin{rem}
Theorem \ref{thm: char} can also be used to give a similar characterization of a class of  holomorphic functions on the punctured disk to be $L^p$-integrable by considering the Laurent expansions.
\end{rem}

	\end{document}